\documentclass[11pt]{amsart}
\usepackage{palatino}
\usepackage{amsfonts}
\usepackage{amssymb}
\usepackage{amscd}
\usepackage[breaklinks,bookmarksopen,bookmarksnumbered]{hyperref}
\usepackage[dvips]{graphicx}

\pagestyle{plain} \setlength{\parindent}{.4 in}
\setlength{\textwidth}{5.8 in} \setlength{\topmargin} {-.3 in}
\setlength{\evensidemargin}{0 in}

\newtheorem{theorem}{Theorem}[section]
\newtheorem{proposition}[theorem]{Proposition}

\newtheorem{lemma}[theorem]{Lemma}
\theoremstyle{definition}

\newtheorem{remark}[theorem]{Remark}

\newtheorem{conjecture}[theorem]{Conjecture}
\newtheorem{conjecture/question}[theorem]{Conjecture/Question}

\newtheorem{remark/definition}[theorem]{Remark/Definition}
\newtheorem{terminology/notation}[theorem]{Terminology/Notation}

\setlength{\oddsidemargin}{0 in} \setlength{\footskip}{.3 in}
\setlength{\headheight}{.3 in} \setlength{\textheight}{8.5 in}

\def\PP{{\textbf P}}
\def\OO{\mathcal{O}}

\def\cD{\mathcal{D}}

\def\cM{\mathcal{M}}

\def\Pic0{{\rm Pic}^0(X)}

\def\mm{\overline{\mathcal{M}}}

\def\dd{\overline{\mathcal{D}}}

\begin{document}

\title{Brill-Noether with ramification at unassigned points}

\author[G. Farkas]{Gavril Farkas}

\address{Humboldt-Universit\"at zu Berlin, Institut F\"ur Mathematik,  Unter den Linden 6
\hfill \newline\texttt{}
\indent 10099 Berlin, Germany} \email{{\tt farkas@math.hu-berlin.de}}

\thanks{}

\begin{abstract} We discuss how via limit linear series and standard facts about divisors on moduli spaces of pointed curves, one can establish a non-existence Brill-Noether results for linear series with prescribed ramification at unassigned points.

\end{abstract}

\maketitle

In the course of developing their theory of limit linear series, among many other applications, Eisenbud and Harris \cite{EH}, \cite{EH1} also considered
the Brill-Noether problem with \emph{prescribed} ramification at \emph{assigned} points. For a smooth curve $C$ of genus $g$, a point $p\in C$ and a linear series $\ell=(L, V)\in G^r_d(C)$, one denotes by $$\alpha^{\ell}(p):0\leq \alpha_0^{\ell}(p)\leq \ldots \leq \alpha_r^{\ell}(p)\leq d-r$$
the \emph{ramification} sequence of $\ell$ at $p$.

Having fixed points $p_1, \ldots, p_n\in C$, Schubert indices $\bar{\alpha}^j:0\leq \alpha_0^j\leq \ldots \leq \alpha_r^j\leq d-r$ of type $(r, d)$ for $j=1, \ldots, n$, the locus of linear series on $C$ having prescribed ramification at $p_1, \ldots, p_n\in C$, that is,
$$G^r_d\bigl(C, (p_j, \bar{\alpha}^j)\bigr):=\{\ell\in G^r_d(C): \alpha^{\ell}(p_j)\geq \bar{\alpha}^j\  \mbox{ for } j=1, \ldots, n\}$$ is a generalized determinantal variety of expected dimension $$\rho(g, r, d, \bar{\alpha}^1, \ldots, \bar{\alpha}^n):=\rho(g, r, d)-\sum_{j=1}^n\sum_{i=0}^r \alpha_i^j,$$ where $\rho(g, r, d):=g-(r+1)(g-d+r)$ is the Brill-Noether number.  It is proved in \cite{EH1} Theorem 1.1, that for a general pointed curve $[C, p_1, \ldots, p_n]\in \cM_{g, n}$, each component of $G^r_d\bigl(C, (p_j, \bar{\alpha}^j)\bigr)$ has dimension $\rho(g, r, d, \bar{\alpha}^1, \ldots, \bar{\alpha}^n)$. For $n=1$, a necessary and sufficient condition for existence is given.  Denoting the positive part of an integer $n\in \mathbb Z$ by $(n)_+:=\mbox{max}\{n, 0\}$, the pointed curve $[C, p]\in \cM_{g, 1}$ carries a linear series $\ell\in G^r_d(C)$ with ramification $\alpha^{\ell}(p)\geq \bar{\alpha}$  if and only if

\begin{equation}\label{assigned}
\sum_{i=0}^r (\alpha_i+g-d+r)_+\leq g.
\end{equation}
Setting $\bar{\alpha}=(0, \ldots, 0)$, one recovers the Brill-Noether theorem. In this note we explain how the methods of \cite{EH1}, \cite{EH2} coupled with basic facts about  $\mbox{Pic}(\mm_{g, n})$  provide a solution to the Brill-Noether problem with \emph{prescribed} ramification at \emph{unassigned} points.

\begin{theorem}\label{nonexistence}
Let $C$ be a general curve of genus $g\geq 2$\   and \ $\bar{\alpha}:0\leq \alpha_0\leq \ldots \leq \alpha_r\leq d-r$ß  a Schubert ramification index.
If $$\rho(g, r, d)-\sum_{i=0}^r \alpha_i<-1,$$ then $C$  carries no linear series $\ell\in G^r_d(C)$ having ramification $\alpha^{\ell}(p)\geq \bar{\alpha}$ at $\mathrm{some}$ point $p\in C$.
\end{theorem}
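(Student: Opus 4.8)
The plan is to translate the statement into a dimension count on moduli and to reduce to the single borderline case in which the relevant locus of curves is a virtual divisor on $\cM_g$. First I would dispose of the trivial range: if $\rho(g,r,d)<0$, then by the classical Brill--Noether theorem a general $C$ carries no $\mathfrak{g}^r_d$ at all, so there is nothing to prove. Hence I assume $\rho(g,r,d)\geq 0$, which with the hypothesis forces $\sum_{i=0}^r\alpha_i\geq \rho(g,r,d)+2$. Since the condition $\alpha^{\ell}(p)\geq \bar\alpha$ grows stronger as the entries of $\bar\alpha$ increase, the locus of curves admitting such a pair $(p,\ell)$ only shrinks when $\sum_i\alpha_i$ grows; I may therefore replace $\bar\alpha$ by a sorted index $\bar\beta\leq\bar\alpha$ with $\sum_i\beta_i=\rho(g,r,d)+2$ (obtained by repeatedly lowering the first occurrence of the maximal entry), so that it suffices to treat the critical case $\rho(g,r,d)-\sum_{i=0}^r\alpha_i=-2$.

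For a fixed general $C$ the space $G^r_d(C)$ is smooth of dimension $\rho(g,r,d)$, and on the product $C\times G^r_d(C)$, of dimension $\rho(g,r,d)+1$, I would consider the incidence locus
$$\I:=\{(p,\ell)\in C\times G^r_d(C):\ \alpha^{\ell}(p)\geq\bar\alpha\}.$$
The requirement $\alpha^{\ell}(p)\geq\bar\alpha$ is a Schubert-type degeneracy condition on the vanishing sequence of $\ell$ at the moving point $p$, of expected codimension $\sum_{i=0}^r\alpha_i$, so the expected dimension of $\I$ is $\rho(g,r,d)+1-\sum_{i=0}^r\alpha_i=-1<0$. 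The theorem asserts exactly that this expected count is the true one, i.e.\ that $\I=\emptyset$ for a general curve. Equivalently, letting $\G^r_d\to\cM_{g,1}$ be the universal space of ramified linear series, whose fibre over a general $[C,p]$ has, by \cite{EH1} Theorem 1.1, dimension $\rho(g,r,d)-\sum_i\alpha_i=-2$ and is therefore empty, I must show that the generic fibre of the image $\widetilde{\G}\subseteq\cM_{g,1}$ over $\cM_g$ is empty. This is stronger than the mere inequality $\widetilde{\G}\neq\cM_{g,1}$ supplied by \cite{EH1}: it amounts to $\widetilde{\G}$ having codimension at least two, so that under the forgetful map $\cM_{g,1}\to\cM_g$ its image is a proper subvariety.

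The hard part is precisely to exclude excess dimension: a determinantal locus such as $\I$ can a priori be nonempty even when its expected dimension is negative, because over a special point $p\in C$ the ramification of a fixed $\ell$ may jump. To certify that the expected codimension is attained for the general curve I would argue by degeneration and limit linear series in the style of \cite{EH}, \cite{EH2}. Passing to the closure $\overline{\G}\subseteq\mm_{g,1}$, specialize $[C,p]$ to a pointed stable curve $[C_0,p_0]$ built from a chain of general one-pointed elliptic curves, with $p_0$ on a single component. If the general smooth member carried a pair $(p,\ell)$, then by the Eisenbud--Harris Regeneration Theorem $C_0$ would carry a limit $\mathfrak{g}^r_d$ with ramification $\geq\bar\alpha$ at $p_0$; but the additivity of the adjusted Brill--Noether number over the components of $C_0$, together with the single extra parameter gained by letting the ramification point roam over the chain, bounds the governing count by $\rho(g,r,d)-\sum_i\alpha_i+1=-1<0$, so no such limit series exists on $C_0$. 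By upper semicontinuity of fibre dimension this emptiness propagates to the generic fibre of $\widetilde{\G}\to\cM_g$, contradicting the existence of $(p,\ell)$ on a general $C$ and completing the proof. Alternatively, and in the spirit of the abstract, one can finish by computing the class of the virtual divisor $\dd:=\pi_*[\overline{\I}]$ on $\mm_g$ as a $\psi$-twisted pushforward of the expected Chern class of the determinantal condition, obtaining an expression $a\lambda-\sum_i b_i\delta_i$ with strictly positive leading coefficients; since $\lambda,\delta_0,\ldots,\delta_{\lfloor g/2\rfloor}$ are independent in $\mathrm{Pic}(\mm_g)\otimes\QQ$, this class is nonzero and effective, so $\dd$ is a proper effective divisor off which the general curve lies. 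Either way, the one substantial point is the dimensional transversality that the limit linear series computation on $C_0$ supplies.
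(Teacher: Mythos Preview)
Your setup is right—you correctly isolate the real content as showing that $\cD=\{[C,p]\in\cM_{g,1}:G^r_d(C,(p,\bar\alpha))\neq\emptyset\}$ has codimension at least two, not just one—but the degeneration step has a genuine gap. If the general curve carries a pair $(p,\ell)$, then in a one-parameter specialization $C_t\rightsquigarrow C_0$ to a chain (or flag) of elliptic curves the section $p_t$ does limit to some $p_0\in C_0$ and $\ell_t$ to a limit $\mathfrak g^r_d$ with ramification $\geq\bar\alpha$ at $p_0$ (this is specialization/properness, not Regeneration). However, you have no control over $p_0$: it is fixed by the family, not free to ``roam,'' and nothing prevents it from landing on an elliptic component $E$ at a point that is torsion relative to the node. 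There the additivity bound collapses on $E$: the paper's own example on $\cM_{1,2}$ shows that $\rho(1,r,d,\alpha^{\ell_E}(q),\alpha^{\ell_E}(p_0))$ can be $-2$ or worse for such pairs. So your claim ``no such limit series exists on $C_0$'' is unjustified, and the argument as written only reproves $\cD\neq\cM_{g,1}$, which you already imported from \cite{EH1}. (The semicontinuity sentence is also reversed: emptiness over a special point never propagates to the generic one.) The alternative virtual-class route has the same defect: a nonzero Porteous class on $\mm_g$ is entirely compatible with the underlying degeneracy locus having excess dimension and dominating $\mm_g$; it does not certify that the image is a proper divisor.

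The paper's proof is a divisor-class vanishing argument on $\mm_{g,1}$. Assuming $\cD$ is a divisor, one writes $[\dd]=a\lambda+c\psi-b_{\mathrm{irr}}\delta_{\mathrm{irr}}-\sum_i b_i\delta_i$. Since the flag locus misses $\dd$ (pointed Brill--Noether for flag curves), $\varphi_{g,1}^*[\dd]=0$, and this produces linear relations expressing every $b_i$ in terms of $c$ and $b_{g-1}$. That alone is not enough. The missing ingredient is an explicit base case in genus $3$ (Proposition~\ref{gen3}): if $\rho(3,r,d,\alpha^{\ell}(p))\le-2$ then $[B,p]$ lies in a codimension-two locus of $\cM_{3,1}$ (hyperflexes, or Weierstrass points on hyperelliptic curves). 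Feeding this into the clutching map $j:\mm_{3,1}\to\mm_{g,1}$ that attaches a fixed general pointed curve of genus $g-3$ gives $j^*[\dd]=0$ in $\mathrm{Pic}(\mm_{3,1})$, forcing $a=b_{\mathrm{irr}}=b_{g-3}=b_{g-2}=b_{g-1}=0$; the flag relations then kill everything, so $[\dd]=0$ and $\cD=\emptyset$. The paper explicitly remarks that the analogous genus-$2$ clutching used in \cite{EH2} would leave a one-dimensional ambiguity because of Mumford's relation on $\mm_{2,1}$; the hand-checked genus-$3$ step is precisely the idea your proposal is missing.
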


As expected, since on an elliptic curve,  due to the existence of translations, one cannot speak of an unassigned point, Theorem \ref{nonexistence} is true for \emph{every} smooth curve of genus $1$. The inequality $\rho(1, r, d, \alpha^{\ell}(p))\geq 0$ holds for every elliptic curve $[E, p]\in \cM_{1,1}$ and every linear series $\ell\in G^r_d(E)$, and lies at the heart of the proof in \cite{EH} of the non-existence part of the Brill-Noether theorem by degeneration to a flag curve with $g$ elliptic tails.  We note that some partial results in the direction of Theorem \ref{nonexistence} were obtained in \cite{L}. The previous result can be, to some extent, generalized to multiple points:

\begin{theorem}\label{multiple}
We fix integers $r, d\geq 1$ and $g\geq 2$ as well as Schubert indices $\bar{\alpha}^1, \ldots, \bar{\alpha}^n$, with
$$\rho(g, r, d, \bar{\alpha}^1, \ldots, \bar{\alpha}^n)<-1.$$
Then the locus $\Bigl\{[C, p_1, \ldots, p_n]\in \cM_{g, n}: G^r_d(C, (p_j, \bar{\alpha}^j))\neq \emptyset \Bigr\}$ has codimension at least two in moduli.
\end{theorem}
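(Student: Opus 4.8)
The plan is to work throughout in $\mm_{g,n}$ and to show that the closure $\overline{\mathcal Z}$ of the locus $\bigl\{[C,p_1,\ldots,p_n]: G^r_d(C,(p_j,\bar\alpha^j))\neq\emptyset\bigr\}$ has no divisorial component, which is exactly the asserted codimension bound. First I would reduce to the extremal case $\rho(g,r,d,\bar\alpha^1,\ldots,\bar\alpha^n)=-2$. Lowering a single entry of some $\bar\alpha^j$ by one raises $\rho$ by one while it can only enlarge $\mathcal Z$, since imposing weaker ramification produces more linear series; as the codimension of a subvariety is at least that of any variety containing it, it suffices to treat the largest value $\rho=-2$ (the degenerate subcase in which all ramification has been exhausted is the unramified Brill--Noether locus, whose codimension is governed by classical Brill--Noether theory).

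The engine is the Eisenbud--Harris additivity of the adjusted Brill--Noether number for limit linear series on curves of compact type. For a stable curve $X=\bigcup_v C_v$ of compact type, with the marked points distributed among the components, any limit linear series of type $(r,d)$ with ramification $\alpha^{\ell}(p_j)\geq\bar\alpha^j$ satisfies $\rho(g,r,d,\bar\alpha^1,\ldots,\bar\alpha^n)\geq\sum_v\rho_{C_v}(\ell_v)$, where $\rho_{C_v}(\ell_v)$ is the Brill--Noether number of the aspect $\ell_v$ adjusted by its ramification both at the nodes and at the marked points lying on $C_v$. If every component is Brill--Noether general with its nodes and marked points in general position, then \cite{EH1} Theorem 1.1 forces each $\rho_{C_v}(\ell_v)\geq 0$, whence $\rho\geq 0$, contradicting $\rho=-2$. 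Hence no such limit series exists over a compact-type curve all of whose components are general, so $\overline{\mathcal Z}$ contains neither the general point of $\cM_{g,n}$ (this much is already \cite{EH1} Theorem 1.1) nor the general point of any boundary divisor of compact type.

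To convert non-existence on these good loci into a codimension statement I would bound the total space $\overline{\mathfrak G}$ of limit linear series carrying the prescribed ramification over the compact-type locus $\mm_{g,n}^{\mathrm{ct}}$. Stratifying $\overline{\mathfrak G}$ by the dual graph of the underlying curve and estimating each stratum by $\dim\Delta_\Gamma$ plus the dimension of its fibre, where the fibre is controlled by the additivity above so that deeper strata lose base dimension at least as fast as they can gain fibre dimension, gives $\dim\overline{\mathfrak G}\leq 3g-5+n$; this is the content of the Eisenbud--Harris dimension theory \cite{EH}. Since $\overline{\mathcal Z}\cap\mm_{g,n}^{\mathrm{ct}}$ is the image of $\overline{\mathfrak G}$ under the proper forgetful map, it too has dimension at most $3g-5+n$, that is, codimension $\geq 2$ away from $\Delta_{\mathrm{irr}}$. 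In particular any divisorial component of $\overline{\mathcal Z}$ would have to coincide with $\Delta_{\mathrm{irr}}$ itself.

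The genuinely delicate point is this non-compact-type boundary, because the limit linear series theory is available only over $\mm_{g,n}^{\mathrm{ct}}$, so the argument above says nothing a priori about $\Delta_{\mathrm{irr}}$. Here I would bring in standard facts about $\mathrm{Pic}(\mm_{g,n})$: normalizing the non-separating node identifies the general point of $\Delta_{\mathrm{irr}}$ with a general point of $\mm_{g-1,n+2}$, trading one unit of genus for two additional marked points, so that after this reduction the additivity estimate applies again and one can run an induction on $g$; alternatively one bounds the coefficient of $\delta_{\mathrm{irr}}$ in the class of a putative divisor $D$ and checks that no effective divisor with the resulting class can be swept out by the ramified series. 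Verifying that the genus reduction preserves the hypothesis $\rho<-1$ with the correct ramification bookkeeping at the two new points, so that the induction closes and the $\Delta_{\mathrm{irr}}$ case is excluded, is the step I expect to require the most care.
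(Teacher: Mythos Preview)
Your argument has a genuine gap at step 3. The Eisenbud--Harris dimension theory gives a \emph{lower} bound on the dimension of the space of limit linear series: every component of $\overline{\mathfrak G}$ in a smoothing family has dimension $\geq \dim(\text{base})+\rho$. The matching \emph{upper} bound is established only at a flag curve, and then propagated by upper semicontinuity of fibre dimension to a \emph{general} curve---which is exactly \cite{EH1} Theorem~1.1. It is not established over all of $\mm_{g,n}^{\mathrm{ct}}$, and indeed it fails: on the open stratum $\cM_{g,n}$ of smooth curves your stratification gives no information whatsoever, yet this is precisely where the putative divisorial component lives. Your sentence ``deeper strata lose base dimension at least as fast as they can gain fibre dimension'' conflates the generic fibre over a stratum with the maximal fibre; over special smooth curves the fibre of $\mathfrak G$ can have dimension far exceeding $\rho$. (Before your reduction to $\rho=-2$ this is visible already in the paper's genus~$2$ example with $\rho=-4$ and codimension $2$.) In short, the bound $\dim\overline{\mathfrak G}\leq 3g-5+n$ is essentially equivalent to the theorem you are trying to prove, and is not supplied by \cite{EH}.

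The paper's proof takes a different route that sidesteps any direct dimension estimate. One assumes the locus is a divisor $D$, writes $[\overline D]$ in the standard basis of $\mathrm{Pic}(\mm_{g,n})$, and then kills the coefficients one by one: the flag map $\varphi_{g,n}:\mm_{0,g+n}\to\mm_{g,n}$ has image disjoint from $\overline D$ (this is the only place additivity enters), so $\varphi_{g,n}^*[\overline D]=0$; for $n=1$ one further pulls back along a clutching map $j:\mm_{3,1}\to\mm_{g,1}$ attaching a fixed genus~$g-3$ tail, using an explicit genus~$3$ calculation (Proposition~\ref{gen3}) to show $j^*[\overline D]=0$; for $n\geq 2$ one pushes $[\overline D]\cdot\delta_{0:ij}$ down along $\pi_i$ and invokes the already-proved $(n-1)$-point case. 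These constraints force all the $\lambda$, $\psi$, and $\delta_{0:S}$ coefficients to vanish, and then a Satake compactification argument (Lemma~\ref{zerodiv}) shows no nonzero effective divisor has such a class. The $\delta_{\mathrm{irr}}$-coefficient is handled automatically by this Picard group machinery---no induction on $g$ through the non-separating node is needed.
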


Setting $n=1$, one recovers Theorem \ref{nonexistence}. Contrary to our first result, Theorem \ref{multiple} is not true in genus $1$ and fails already on $\cM_{1, 2}$.
Indeed, for an integer $a\geq 2$, we fix an elliptic curve $E$ and distinct points $p_1, p_2\in E$ such that $p_1-p_2\in \mbox{Pic}^0(E)[a]$. Choose a rational function $f\in \mathbb C(E)$ with $\mbox{div}(f)=a\cdot p_2-a\cdot p_1$ and consider the linear series $\ell:=(\OO_E(2a\cdot p_1), V)\in G^2_{2a}(E)$, where $V:=\langle 1, f, f^2\rangle$. Clearly $\alpha^{\ell}(p_j)=(0, a-1, 2a-2)$ for $j=1, 2$, hence one calculates $\rho(1, 2, 2a, \alpha^{\ell}(p_1), \alpha^{\ell}(p_2))=-2$. On the other, the locus $D_a:=\bigl\{[E, p_1, p_2]\in\cM_{1, 2}: p_1-p_2\in \mbox{Pic}^0(E)[a]\bigr\}$ is obviously a divisor in the moduli space. Considering higher dimensional linear series, the same idea produce counterexamples with even more negative Brill-Noether number.

\vskip 3pt

Straightforward generalizations of Theorem \ref{multiple} to lower Brill-Noether numbers and higher codimension in moduli are false when $r\geq 2$, as the following example
illustrates. Let $C$ be a curve of genus $2$ and $L=K_C^{\otimes 2}\in G^2_4(C)$. If $p\in C$ is a Weierstrass point,
then $\alpha^L(p)=(0, 1, 2)$. In particular, if both $p_1, p_2\in C$ are Weierstrass points, then
$\rho(2, 2, 4, \alpha^L(p_1), \alpha^L(p_2))=-4$. On the other hand, the codimension of the locus
$$\bigl\{[C, p_1, p_2]\in \cM_{2, 2}: G^2_4\bigl(C, (p_j, (0, 1, 2))\bigr)\neq \emptyset\bigr\}$$
is two, in particular it projects onto $\cM_2$. Similar examples can be constructed for arbitrary
genus and, in this sense, Theorem \ref{multiple} is optimal. Even though these counterexamples invite caution before formulating new predictions, we believe the following statement should be true:

\begin{conjecture}
Let $g, r, d$ be positive integers and $\bar{\alpha}$ a Schubert index of type $(r, d)$ with $\rho(g, r, d, \bar{\alpha})<-2$. Then the locus of curves
$[C]\in \cM_g$ carrying a linear series $\ell\in G^r_d(C)$ with $\alpha^{\ell}(p)\geq \bar{\alpha}$ for \emph{some} point $p\in C$, has codimension at least two in moduli.
\end{conjecture}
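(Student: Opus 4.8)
The plan is to rephrase the statement on $\cM_{g,1}$ and to bound the dimension of the relevant image in $\cM_g$. Writing $\pi\colon\cM_{g,1}\to\cM_g$ for the map that forgets the marked point and setting
$$\cZ:=\bigl\{[C,p]\in\cM_{g,1}: G^r_d(C,(p,\bar\alpha))\neq\emptyset\bigr\},$$
the locus in the conjecture is precisely $\pi(\cZ)$. Since $\dim\cM_{g,1}=3g-2$ and each component of the fiber $G^r_d(C,(p,\bar\alpha))$ has expected dimension $\rho(g,r,d,\bar\alpha)$, the \emph{expected} codimension of $\cZ$ in $\cM_{g,1}$ equals $-\rho(g,r,d,\bar\alpha)\geq 3$. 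Were this bound attained we would have $\dim\cZ\leq 3g-5$, and since $\dim\pi(\cZ)\leq\dim\cZ$ this yields at once $\mathrm{codim}\bigl(\pi(\cZ),\cM_g\bigr)\geq 2$. The numerology matches the earlier results exactly: the regime $\rho(g,r,d,\bar\alpha)<-1$ of Theorem \ref{nonexistence} corresponds to expected codimension $\geq 2$ in $\cM_{g,1}$, hence codimension $\geq 1$ in $\cM_g$, while the sharper hypothesis $\rho(g,r,d,\bar\alpha)<-2$ is exactly what is needed to push this to codimension $2$ downstairs. The whole content of the conjecture is therefore to exclude components of $\cZ$ of \emph{excess} dimension whose image sweeps out a divisor in $\cM_g$; excess that is absorbed into the fibers of $\pi$ (extra ramification points on a fixed curve) is harmless.

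To exclude such excess I would argue by contradiction and mimic the proof of Theorem \ref{multiple}. Suppose $\pi(\cZ)$ contained an irreducible divisor $\cD\subset\cM_g$, and pass to its closure $\overline{\cD}\subset\mm_g$; by Theorem \ref{nonexistence} the general member of $\cD$ is already special, so $\cD$ is proper and the task is to rule out its being a divisor. Testing against a general covering pencil $B\subset\mm_g$, one has $B\cap\overline{\cD}\neq\emptyset$ for such $B$; I would then degenerate $B$ into the boundary $\delta_0$, or to a flag curve with elliptic tails, and feed each intersection point into the theory of limit linear series. The prescribed ramification $\bar\alpha$ at the unassigned point distributes among the aspects of a refined limit $g^r_d$, and the additivity of the adjusted Brill-Noether numbers, combined with the inequality $\rho(1,r,d,\alpha^{\ell}(p))\geq 0$ on every elliptic tail --- the very input behind the non-existence half of the Brill-Noether theorem recalled in the introduction --- should force a strictly negative adjusted invariant on the core of the degeneration, contradicting the existence of the limit series once $\rho(g,r,d,\bar\alpha)<-2$. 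Thus a general covering pencil would miss $\overline{\cD}$, which is impossible for an honest divisor. A more robust variant replaces this with a class computation: if $\overline{\pi(\cZ)}$ were an effective divisor, one would determine its class $a\lambda-b_0\delta_0-\sum_i b_i\delta_i$ in $\mathrm{Pic}(\mm_g)$ by evaluating the pushforward of a virtual ramification class on test pencils, and I expect these coefficients to violate the known effectivity and slope constraints precisely in the range $\rho(g,r,d,\bar\alpha)<-2$.

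The hard part --- and the reason the statement is only conjectural --- is that none of these arguments controls the \emph{excess} dimension of the ramification locus over Brill-Noether special curves. The examples in the text (the two Weierstrass points on a genus-$2$ curve with $L=K_C^{\otimes2}$, and its higher-genus cousins) show that for $r\geq 2$ the loci $G^r_d(C,(p,\bar\alpha))$ genuinely jump in dimension over special $[C]$, so the clean count of the first paragraph is not automatic and the Eisenbud--Harris dimension theorem, which governs only the \emph{general} pointed curve, does not suffice. A complete proof seems to require a uniform upper bound on $\dim G^r_d(C,(p,\bar\alpha))$ valid for \emph{every} $[C,p]\in\cM_{g,1}$, or a proof that whatever excess occurs stays vertical for $\pi$ rather than spreading $\pi(\cZ)$ to codimension one. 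Supplying this uniform control over all of moduli --- rather than at a single general point --- is exactly the missing ingredient.
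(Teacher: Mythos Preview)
The statement you are attempting is labeled in the paper as a \emph{Conjecture}, not a theorem; the paper offers no proof of it whatsoever. There is therefore nothing to compare your proposal against. Your third paragraph correctly identifies why the statement remains open: the Eisenbud--Harris dimension theorem controls $G^r_d(C,(p,\bar\alpha))$ only for a \emph{general} pointed curve, and the excess-dimension phenomena illustrated by the $K_C^{\otimes 2}$ example show that a uniform bound over all of $\cM_{g,1}$ is genuinely missing. Your first two paragraphs are a reasonable heuristic outline, but as you yourself note, neither the pencil-degeneration argument nor the slope/effectivity computation can be completed without that missing uniform control, so the proposal is not a proof and was not expected to be one.
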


I would like to thank the referee for a careful reading and pertinent comments which improved the presentation.



%

\section{The proofs}

The proof of Theorem \ref{nonexistence} is an adaptation of an idea that appears in \cite{EH2}. First, by explicit methods, we prove Theorem \ref{nonexistence} in genus $3$. Then, assuming by contradiction that a general curve $[C]\in \cM_g$ possesses a point $p\in C$ with $G^r_d(C, (p, \bar{\alpha}))\neq \emptyset$, the locus of such pairs $[C, p]$ in the universal curve $\cM_{g, 1}$ is an effective divisor. By using the solution to the problem in genus $3$ and basic facts about the Picard group of $\mm_{g, 1}$ (that have already been used in \cite{EH1}, \cite{EH2}), we show that this effective divisor must be empty, thus establishing Theorem \ref{nonexistence}. Theorem \ref{multiple} is proved by induction and relies on the already established case $n=1$.

\vskip 3pt

We work throughout over the complex numbers and we assume familiarity with the theory of limit linear series \cite{EH}. We begin with some preliminaries on moduli spaces of curves and recall that for $g\geq 3$ and $n\geq 1$, the rational Picard group $\mbox{Pic}(\mm_{g, n})$ is freely generated by the Hodge class $\lambda$, the relative cotangent classes $\psi_1, \ldots, \psi_n$, the boundary divisor class $\delta_{\mathrm{irr}}:=[\Delta_{\mathrm{irr}}]$ of irreducible $n$-pointed stable
curves of curves $g$ and by the classes $\delta_{i:S}:=[\Delta_{i:S}]$, where for each $i\geq 0$ and $S\subset \{1, \ldots, n\}$, the general point of the boundary divisor $\Delta_{i:S}$ corresponds to a transverse union of two smooth curves of genus $i$ and $g-i$ respectively, meeting in one point,  the marked points lying on the genus $i$ component being precisely those labeled by $S$. Obviously $\delta_{i:S}=\delta_{g-i:S^c}$. When $n=0$, we write as usually $\delta_i:=\delta_{i:\emptyset}\in \mbox{Pic}(\mm_g)$ for $i=0, \ldots, \lfloor \frac{g}{2} \rfloor$. Similarly, for $n=1$, we write $\delta_i:=\delta_{i:\{1\}}\in \mbox{Pic}(\mm_{g,1})$, for $i=1, \ldots, g-1$. The following well-known fact will be used in the course of the proof of Theorem \ref{multiple}:

\begin{lemma}\label{zerodiv} Let $g\geq 3$ and $\cD$ an effective divisor on $\cM_{g, n}$ such that all the $\lambda, \psi_1, \ldots, \psi_n$ and $\{\delta_{0:S}\}_{|S|=2}$-coefficients in the expansion of the class $[\dd]\in \mathrm{Pic}({\mm_{g, n}})$ of its closure in $\mm_{g, n}$ are  equal to zero. Then $\cD=0$.
\end{lemma}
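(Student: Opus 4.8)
The plan is to argue by induction on the number $n$ of marked points, using the morphism $\pi\colon \mm_{g,n}\to \mm_{g,n-1}$ that forgets the last point; the base case (no marked points) is a statement on $\cM_g$ that I will reduce to the positivity of the Hodge class. Write the class of the closure as
$$[\dd]=a\lambda+\sum_{i=1}^n b_i\psi_i+c_{\mathrm{irr}}\delta_{\mathrm{irr}}+\sum_{i,S}c_{i:S}\,\delta_{i:S}\in \mathrm{Pic}(\mm_{g,n})\otimes\QQ,$$
so that the hypotheses read $a=0$, $b_i=0$ for all $i$, and $c_{0:S}=0$ whenever $|S|=2$.

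Let $F\subset\mm_{g,n}$ be the general fibre of $\pi$, the image of a general curve $[C,p_1,\dots,p_{n-1}]$ with the last point varying over $C$; it is a complete curve. The classes $\lambda$, $\psi_1,\dots,\psi_{n-1}$ and $\delta_{\mathrm{irr}}$ are pulled back from $\mm_{g,n-1}$ and hence restrict trivially to $F$, while $\psi_n\cdot F=\deg\bigl(\omega_C+\sum_{j<n}p_j\bigr)=2g+n-3$. The only boundary divisors meeting $F$ are the $\delta_{0:\{j,n\}}$ for $j<n$, each transversally in one point (the collision of $p_n$ with $p_j$). Therefore
$$\dd\cdot F=b_n\,(2g+n-3)+\sum_{j=1}^{n-1}c_{0:\{j,n\}}=0,$$
since $b_n=0$ and each $c_{0:\{j,n\}}=0$ by hypothesis.

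As $\dd$ is effective and the fibre class is numerically constant, the equality $\dd\cdot F=0$ forces every irreducible component of $\dd$ to map to a proper subvariety of $\mm_{g,n-1}$: a component dominating the base would meet the general fibre in finitely many points, contributing positively. A dimension count then shows each such component is a full $\pi$-preimage, so $\dd=\pi^*\dd'$ and $\cD=\pi^*\cD'$ for an effective divisor $\cD'$ on $\cM_{g,n-1}$. Using $\pi^*\lambda=\lambda$, $\pi^*\psi_j=\psi_j-\delta_{0:\{j,n\}}$, $\pi^*\delta_{\mathrm{irr}}=\delta_{\mathrm{irr}}$ and $\pi^*\delta_{i:S}=\delta_{i:S}+\delta_{i:S\cup\{n\}}$, one reads off from $[\dd]=\pi^*[\dd']$ that the $\lambda$-, the $\psi_j$- ($j<n$) and the $\delta_{0:S}$-coefficients ($|S|=2$, $n\notin S$) of $\dd'$ all vanish. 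These are exactly the hypotheses of the lemma for $\cD'$ on $\cM_{g,n-1}$, so by induction $\cD'=0$ and hence $\cD=0$.

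It remains to treat the base case, where $\cD$ is an effective divisor on $\cM_g$ with vanishing $\lambda$-coefficient. Here $\mathrm{Pic}(\cM_g)\otimes\QQ=\QQ\lambda$ for $g\ge 3$, and $a=0$ gives $[\cD]=0$; the well-known positivity of the Hodge class—no nonzero effective divisor on $\cM_g$ is numerically trivial—yields $\cD=0$. I expect the main obstacle to be the effectivity argument of the third paragraph: one must rule out components of $\dd$ dominating $\mm_{g,n-1}$ and then recognise the remainder as a genuine pullback, after which the descent of the three families of coefficients is a bookkeeping check with the comparison formulas for the $\psi$- and $\delta$-classes.
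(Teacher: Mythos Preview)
Your argument is essentially the paper's: both reduce to $\cM_g$ via forgetful maps by showing the relevant fibre intersections vanish, then invoke that no nonzero effective divisor on $\cM_g$ has vanishing $\lambda$-coefficient (the paper phrases this last step via the Satake compactification). Your explicit induction on $n$ is, if anything, a tidier packaging of the same idea than the paper's one-shot descent to $\mm_g$.

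One slip to correct: for $j<n$ the class $\psi_j$ is \emph{not} pulled back from $\mm_{g,n-1}$; rather $\pi^*\psi_j=\psi_j-\delta_{0:\{j,n\}}$, a formula you yourself use two paragraphs later. Hence $\psi_j\cdot F=1$, as the paper records. This does not affect your computation of $\dd\cdot F$, since the missing term $\sum_{j<n}b_j$ vanishes by hypothesis anyway, but the internal inconsistency should be fixed.
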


\begin{proof} Let $\pi:\mm_{g, n}\rightarrow \mm_g$ be the forgetful morphism. If $\pi(\dd)=\mm_g$, then there exists an index $1\leq i\leq n$ such that $\dd$ intersects non-trivially the general fibre $F_i$ of the morphism $\pi_i:\mm_{g, n}\rightarrow \mm_{g, n-1}$ forgetting the $i$-th marked point. Note that $F_i\cdot \psi_i=2g-3+n$, as well as $F_i\cdot \psi_j=1$ and $F_i\cdot \delta_{0:ji}=1$ for $j\in \{i\}^c$, whereas the intersection numbers with all other generators of $\mbox{Pic}(\mm_{g, n})$ are zero. We obtain that $F_i\cdot \dd=0$, a contradiction. Therefore, $\cD=\pi^*(\cD')$, where $\cD'$ is an effective divisor on $\cM_g$. Since the Hodge class on $\mm_{g, n}$ is pulled back via $\pi$ via the Hodge class on $\mm_g$ and the map $\pi^*:\mbox{Pic}(\mm_g)\rightarrow \mbox{Pic}(\mm_{g, n})$ is injective, it follows that the $\lambda$-coefficient in the expansion of $[\dd']\in \mbox{Pic}(\mm_g)$ is equal to $0$ as well. But $\cM_g$ admits a compactification, namely the \emph{Satake compactification} $\mm_g^s$,  having boundary $\mm_g^s-\cM_g$ of codimension $2$; in fact, there is a regular Torelli map $t:\mm_g\rightarrow \mm_g^s$, assigning to a stable curve $[C]\in \mm_g$ the product of the degree zero Jacobian varieties of the components of its normalization $\widetilde{C}$ of $C$, and under this map $t_*(\Delta_i)=0$, for $i=0, \ldots, \lfloor \frac{g}{2}\rfloor$. A consequence of the existence of $\mm_g^s$ is that there exists no effective divisor $\cD'$ on $\cM_g$ whose $\lambda$-coefficient is zero, hence $\cD'=0$, and thus $\cD=0$.
\end{proof}

\vskip 3pt

Following \cite{EH1}, we consider the clutching map $\varphi_{g, n}:\mm_{0, g+n}\rightarrow \mm_{g, n}$, given by
$$\varphi_{g, n}\bigl([R, x_1, \ldots, x_g, p_1, \ldots, p_n]\bigr):=[R \cup_{x_1} E_1 \cup \ldots \cup_{x_g} E_g, \ p_1, \ldots, p_n],$$
obtained by gluing fixed elliptic curves $E_1, \ldots, E_g$ at the first $g$ marked points of a curve $R$ of arithmetic genus $0$. The marked points $p_1, \ldots, p_n$ of the resulting genus $g$ stable curve lie on the rational spine $R$.  We consider the action of the symmetric group $\mathfrak{S}_g$ on $\mm_{0, g+n}$ by permuting the marked points labeled by $x_1, \ldots, x_g$. For a subset $S\subset \{p_1, \ldots, p_n\}$ and an integer   $0\leq i\leq g$ with $2\leq |S|+i\leq g+n-2$, we define the $\mathfrak S_g$-invariant boundary divisor
$$B^S_i:=\sum_{\substack{T\subset \{x_1, \ldots, x_g\}\\ |T|=i}} \delta_{0: S\cup T}\in \mathrm{Pic}(\mm_{0, g+n})^{\mathfrak{S}_g}.$$
Clearly $\varphi_{g, n}^*(\delta_{i:S})=B_i^S$, whenever $i+|S|\geq 2$, whereas $\varphi_{g, n}^*(\psi_i)=\psi_{p_i}$. In order to distinguish between cotangent classes on $\mm_{0, g+n}$ and $\mm_{g, n}$, we label the corresponding marked point by $p_i$ on the genus zero curve and by $i\in \{1, \ldots, n\}$ on the genus $g$ curve.

\begin{lemma}\label{invpic}
The $\mathfrak{S}_g$-invariant rational Picard group $\mathrm{Pic}(\mm_{0, g+2})^{\mathfrak{S}_g}$ is freely generated by the boundary classes $\{B^{p_1}_i\}_{i=1}^{g-1}$ and $\{B^{p_1 p_2}_j\}_{j=1}^{g-2}$. In particular, $\mathrm{dim}_{\mathbb Q}\ \mathrm{Pic}(\mm_{0, g+2})^{\mathfrak{S}_g}=2g-3$.
\end{lemma}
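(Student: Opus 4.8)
The plan is to combine Keel's presentation of the rational Picard group of a genus-zero moduli space with the exactness of the functor of $\mathfrak{S}_g$-invariants on $\mathbb{Q}[\mathfrak{S}_g]$-modules. Recall that $\mathrm{Pic}(\mm_{0,N})\otimes \mathbb{Q}$ is generated by the boundary classes $\delta_{0:T}$ (for $2\le |T|\le N-2$, with $\delta_{0:T}=\delta_{0:T^c}$), the only relations being the cross-ratio relations: for any four of the marked points $a,b,c,d$,
\[
\sum_{\substack{T\ni a,b\\ c,d\notin T}}\delta_{0:T}=\sum_{\substack{T\ni a,c\\ b,d\notin T}}\delta_{0:T}=\sum_{\substack{T\ni a,d\\ b,c\notin T}}\delta_{0:T}.
\]
Writing $\mathrm{Pic}(\mm_{0,g+2})\otimes \mathbb{Q}=F/R$ with $F$ free on the boundary classes and $R$ the span of these relations, exactness of invariants yields $\mathrm{Pic}(\mm_{0,g+2})^{\mathfrak{S}_g}\otimes\mathbb{Q}=F^{\mathfrak{S}_g}/R^{\mathfrak{S}_g}$.

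I would first describe $F^{\mathfrak{S}_g}$. As $F$ is a permutation module, $F^{\mathfrak{S}_g}$ has for a basis the orbit-sums of boundary classes, and the $\mathfrak{S}_g$-orbit of $\delta_{0:T}$ is determined by the pair $\bigl(T\cap\{p_1,p_2\},\ |T\cap\{x_1,\dots,x_g\}|\bigr)$, up to the complement identification $T\sim T^c$. Enumerating these invariants subject to $2\le |T|\le g$ produces exactly $2g-2$ orbit-sums: the $g-1$ classes $B^{p_1}_i$ ($1\le i\le g-1$), the $g-2$ classes $B^{p_1p_2}_j$ ($1\le j\le g-2$), and the single extra class $B^{p_1p_2}_0=\delta_{0:\{p_1,p_2\}}$. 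In particular the listed $2g-3$ classes, together with $B^{p_1p_2}_0$, span $\mathrm{Pic}(\mm_{0,g+2})^{\mathfrak{S}_g}$, so the whole point is to show that these $2g-3$ classes form a basis.

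For the upper bound $\dim_{\mathbb{Q}}\le 2g-3$ I would exhibit one invariant relation, obtained by symmetrizing the cross-ratio relation. Applying it with $a=p_1$, $b=p_2$, $c=x_s$, $d=x_t$ and summing over all ordered pairs $(s,t)$ with $s\ne t$, then collecting terms orbit by orbit, gives a relation of the form $\sum_{k=0}^{g-2}\binom{g-k}{2}\,B^{p_1p_2}_k=\tfrac12\sum_{k=1}^{g-1}k(g-k)\,B^{p_1}_k$. The class $\delta_{0:\{p_1,p_2\}}$ occurs on the left for every pair but never on the right, so the coefficient of $B^{p_1p_2}_0$ equals $\binom{g}{2}\ne 0$; this expresses $B^{p_1p_2}_0$ in terms of the remaining $2g-3$ classes, which therefore already span $\mathrm{Pic}(\mm_{0,g+2})^{\mathfrak{S}_g}$.

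The main obstacle is the matching lower bound, namely the linear independence of $\{B^{p_1}_i\}_{i=1}^{g-1}\cup\{B^{p_1p_2}_j\}_{j=1}^{g-2}$; once this is proved, these $2g-3$ classes are a basis and $\dim R^{\mathfrak{S}_g}=1$ follows automatically. Since the $F$-curves span the space of numerical curve classes on $\mm_{0,g+2}$ and the intersection pairing is perfect, it suffices to pair against a well-chosen finite list of $F$-curves and check that the resulting intersection matrix has rank $2g-3$. Using the standard rule that $F(A_1,A_2,A_3,A_4)$ meets $\delta_{0:T}$ with multiplicity $+1$ when $\{T,T^c\}$ is a union of two of the legs, $-1$ when $T$ or $T^c$ is a single leg of size at least two, and $0$ otherwise, one computes these numbers orbit by orbit. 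I would separate the coefficients of the $B^{p_1p_2}_j$ using curves carrying $p_1,p_2$ on a common leg (which meet only the $B^{p_1p_2}_j$), and separate the coefficients of the $B^{p_1}_i$—breaking the $i\leftrightarrow g-i$ symmetry that the more symmetric curves leave intact—using asymmetric curves such as $F\bigl(\{p_1\},\{x_1,\dots,x_m\},\{x_{m+1}\},\{p_2,x_{m+2},\dots,x_g\}\bigr)$. The genuine difficulty is purely combinatorial: proving that one such explicit family yields a nonsingular $(2g-3)\times(2g-3)$ matrix for all $g$, which I expect to follow by exhibiting a block-triangular submatrix, with the cases $g=3,4$ checked directly as a guide.
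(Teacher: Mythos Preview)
Your strategy---Keel's presentation, exactness of taking $\mathfrak{S}_g$-invariants, counting the $2g-2$ orbit-sums in $F^{\mathfrak{S}_g}$, and exhibiting one symmetrized cross-ratio relation with nonzero $B_0^{p_1p_2}$-coefficient---is exactly the argument behind \cite{FG} Proposition~1, which is all the paper invokes. So the approach is the intended one; the issue is that you stop short of finishing it.

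The genuine gap is the linear independence step. You propose to pair against a family of $F$-curves and extract a nonsingular $(2g-3)\times(2g-3)$ matrix, but you only say you ``expect'' a block-triangular structure and leave the verification undone. This is the entire content of the lemma, and the $F$-curve route, while feasible, is heavier than necessary. There is a much shorter way to close the argument using what you have already set up: rather than proving independence in $\mathrm{Pic}$, bound $\dim R^{\mathfrak{S}_g}$ from above directly. Since the averaging projector maps $R$ onto $R^{\mathfrak{S}_g}$, it suffices to average every cross-ratio relation. The $\mathfrak{S}_g$-orbits of $4$-tuples $\{a,b,c,d\}$ are determined by how many of the four points lie in $\{p_1,p_2\}$. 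For the types $(x,x,x,x)$ and $(p_i,x,x,x)$ the three cross-ratio sums become equal after symmetrizing in the $x$-labels, so their averages vanish; for the type $(p_1,p_2,x,x)$ the two ``mixed'' pairings $p_1x_s\,|\,p_2x_t$ and $p_1x_t\,|\,p_2x_s$ average to the same thing, so one obtains exactly the single relation you already wrote down. Hence $\dim R^{\mathfrak{S}_g}\le 1$, and together with your explicit nonzero relation this gives $\dim R^{\mathfrak{S}_g}=1$ and therefore $\dim \mathrm{Pic}(\mm_{0,g+2})^{\mathfrak{S}_g}=(2g-2)-1=2g-3$, with the listed classes a basis. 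This replaces the unfinished $F$-curve computation entirely.
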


\begin{proof} This follows for instance from \cite{FG} Proposition 1. It is shown in \emph{loc. cit.} that there is precisely one relation between the $\mathfrak{S}_g$-invariant boundary divisor classes, which expresses $B^{p_1 p_2}_0=\delta_{0: p_1p_2}$ in terms of all the other invariant boundary classes. The cotangent classes are also expressible in this basis, see \cite{FG} Lemma 1:
$$\psi_{p_1}=\sum_{j=1}^{g-1}\frac{(g+1-j)(g-j)}{(g+1)g}\bigl(B_j^{p_1}+B_{j-1}^{p_1p_2}\bigr).$$
\end{proof}

\noindent We  fix a general pointed curve $[C, p', q]\in \cM_{g-3, 2}$ and consider another clutching map
$$j:\mm_{3, 1}\rightarrow \mm_{g, 1}, \ \mbox{  } \ j\bigl([B, p]\bigr):=[B\cup _{p\sim p'} C, \ q]. $$
The following formulas are easy to prove, see for instance the proof of Lemma 4.3 in \cite{EH2}  or Lemma 3.3 in \cite{AC}:
$$j^*(\lambda)=\lambda, \ j^*(\psi)=0, \ j^*(\delta_0)=0, \ j^*(\delta_{g-3})=-\psi, \ j^*(\delta_{g-2})=\delta_1, \ j^*(\delta_{g-1})=\delta_2,$$
and $j^*(\delta_i)=0$, for $i=1, \ldots, g-4$.

\vskip 3pt

The next result establishes Theorem \ref{nonexistence} for $g=3$. In genus $2$, it is shown in \cite{EH2} Lemma 3.3 that the inequality $\rho(2, r, d, \alpha^{\ell}(p))\geq -1$ holds for any pointed curve $[C, p]\in \cM_{2,1}$ and any linear series $\ell\in G^r_d(C)$, with equality if and only if $p\in C$ is a Weierstrass point and $\ell=|(r+2)\cdot p|+(d-r-2)\cdot p$. This establishes Theorem \ref{nonexistence} in genus $2$. The genus $3$ analogue of this statement will be the starting step in our induction argument.

\begin{proposition}\label{gen3}
Let $[C, p]\in \cM_{3, 1}$ be a pointed curve of genus $3$ and $\ell\in G^r_d(C)$. If $\rho(3, r, d, \alpha^{\ell}(p))\leq -2$, then either $p\in C$ is a hyperflex, that is, $K_C=\OO_C(4p)$, or else, $C$ is hyperelliptic and $p\in C$ is a Weierstrass point. In particular, for any Schubert index $\bar{\alpha}:\alpha_0\leq \ldots \leq \alpha_{r}\leq d-r$ such that $\rho(3, r, d, \bar{\alpha})\leq -2$, each component of the locus
$$\Bigl\{[C, p]\in \cM_{3, 1}: G^r_d(C, (p, \bar{\alpha}))\neq \emptyset\Bigr\}$$
has codimension at least $2$.
\end{proposition}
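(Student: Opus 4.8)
The plan is to reduce an arbitrary $\ell=(L,V)\in G^r_d(C)$ with $\rho(3,r,d,\alpha^{\ell}(p))\le-2$ to a \emph{complete} and \emph{special} series of degree at most $4$, where only a very short list of possibilities survives, and then to read off the conclusion. First I would normalize: subtracting the base locus only lowers $\rho$ and does not move $[C,p]$, so I may assume $\ell$ is base-point-free with $\alpha_0^{\ell}(p)=0$. Write $\sigma:=g-d+r=3-d+r$, let $a_0<\dots<a_r$ be the vanishing sequence (so $\alpha_i^{\ell}(p)=a_i-i$), and set $Q(\ell):=\sum_{i=0}^r(\alpha_i^{\ell}(p)+\sigma)_+$. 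From $h^0(L-a_ip)\ge r+1-i$ and Riemann-Roch one gets $h^0(K_C-L+a_ip)=h^1(L-a_ip)\ge\alpha_i^{\ell}(p)+\sigma$, hence $\ge(\alpha_i^{\ell}(p)+\sigma)_+$. Writing $(n)_-:=\max\{-n,0\}$, the identity $\rho(3,r,d,\alpha^{\ell}(p))=3-Q(\ell)+\sum_i(\alpha_i^{\ell}(p)+\sigma)_-$ then turns the hypothesis $\rho\le-2$ into $Q(\ell)\ge5$.

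The first key point is that $Q$ cannot decrease when $V$ is enlarged inside $H^0(L)$: adding a single section and using $(x+1)_+-(x)_+\ge0$ gives $Q(\ell)\le Q(|L|)$. Since $\sigma_{|L|}=h^1(L)\ge0$ for the complete series, each $\alpha_j^{|L|}(p)+\sigma_{|L|}$ is nonnegative, so that $\rho(3,R,d,\alpha^{|L|}(p))=3-Q(|L|)\le3-Q(\ell)\le\rho(3,r,d,\alpha^{\ell}(p))\le-2$. In other words, passing to the complete series $|L|$ never increases $\rho$, and I may assume $\ell=|L|$ is complete.

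If $L$ is special, then $h^0(K_C-L)>0$ forces $\deg L\le4$, and a base-point-free complete special series on a genus $3$ curve is one of $g^1_2$, $g^1_3=|K_C-x|$, or $K_C=g^2_4$. Computing the ramification in each case, $g^1_3$ gives $\rho\ge-1$ (so it does not occur), $K_C$ gives $\rho=-w(p)$, where $w(p):=\sum_i\alpha_i^{K_C}(p)$ is the canonical Weierstrass weight, and $g^1_2$ gives $\rho=-1-\alpha_1^{\ell}(p)$; hence $\rho\le-2$ forces either $w(p)\ge2$, i.e. a hyperflex $K_C=\OO_C(4p)$, or $C$ hyperelliptic with $2p$ a fibre of the $g^1_2$, i.e. $p$ a Weierstrass point. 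The main obstacle is the \emph{nonspecial} case, where $L$ may have large degree — for instance $L=\OO_C(6p)$ at a hyperelliptic Weierstrass point has $\rho=-3$. Here I would subtract $p$ repeatedly: a direct bookkeeping with the vanishing sequence shows that replacing $L$ by $L-p$ preserves $\rho$ both at non-base-point steps (where $L$ stays nonspecial) and at base-point steps, while $h^1$ jumps by one precisely at a base-point step. After finitely many subtractions, followed by removal of any base locus (which does not increase $\rho$), one reaches a base-point-free special bundle of degree $\le4$ with the same $\rho\le-2$, reducing to the case already treated.

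For the final assertion, any $\ell$ with $\alpha^{\ell}(p)\ge\bar\alpha$ satisfies $\rho(3,r,d,\alpha^{\ell}(p))\le\rho(3,r,d,\bar\alpha)\le-2$, so the dichotomy places the locus $\{[C,p]:G^r_d(C,(p,\bar\alpha))\neq\emptyset\}$ inside the union of the hyperflex locus $\{[C,p]:K_C=\OO_C(4p)\}$ and the locus of hyperelliptic curves with a marked Weierstrass point. Each of these is $5$-dimensional inside the $7$-dimensional space $\cM_{3,1}$ — a hyperelliptic curve varies in a $5$-dimensional family and carries only finitely many Weierstrass points, while curves possessing a hyperflex form a divisor in $\cM_3$ carrying only finitely many hyperflexes — so each has codimension $2$, which yields the claim. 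The step I expect to require the most care is the degree reduction in the nonspecial case, since there one must simultaneously track the ramification sequence at $p$ and the value of $\rho$ under subtraction of $p$, distinguishing base-point from non-base-point steps.
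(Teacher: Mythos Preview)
Your argument is correct, but the paper takes a markedly shorter and more direct route that avoids both the reduction to complete series and the degree-lowering process you flag as delicate.

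The paper observes that for each $i=0,\ldots,r$ one has $\ell(-a_{r-i}\cdot p)\in G^i_{d-a_{r-i}}(C)$, and then invokes the single elementary fact that on a genus $3$ curve any $\Lambda\in G^i_e(C)$ with $i\ge1$ satisfies $e\ge i+3$, the only exceptions being $|K_C|\in G^2_4$ and the pencils $G^1_3$ (non-hyperelliptic) or $G^1_2,G^1_3$ (hyperelliptic). This yields, in the non-hyperelliptic case, the string of inequalities
\[
d-a_{r-i}\ge i+3\quad(i\ge3),\qquad d-a_{r-2}\ge4,\quad d-a_{r-1}\ge3,\quad d-a_r\ge0,
\]
whose sum is exactly $(r+1)(d-r)-\sum_i\alpha_i^\ell(p)\ge 3r-2$, i.e.\ $\rho\ge-2$; equality forces $(a_{r-2},a_{r-1},a_r)=(d-4,d-3,d)$ and hence $h^0(\OO_C(4p))=3$. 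The hyperelliptic case only loosens the bound $d-a_{r-1}\ge3$ to $\ge2$, and equality there gives $h^0(\OO_C(2p))=2$.

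So the paper never passes to $|L|$, never subtracts $p$, and never classifies low-degree special series: it extracts all the information from the single Clifford/Riemann--Roch bound applied to the subseries $\ell(-a_{r-i}p)$. Your approach, by contrast, first linearises the problem (the $Q$-monotonicity under enlarging $V$ is a nice reusable lemma), and then funnels every series down to one of $g^1_2$, $g^1_3$, $|K_C|$; this is more modular and makes the geometry of the exceptional loci transparent, at the cost of the extra bookkeeping in the nonspecial reduction. Both arrive at the same dichotomy, but the paper's summation trick is worth knowing: it replaces your entire nonspecial reduction by a one-line inequality.
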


\begin{proof} From Riemann-Roch it follows that if $\Lambda\in G^r_d(C)$ is a linear series with $r\geq 1$, then the inequality $d\geq 3+r$ always holds except
when $r=2$ and $\Lambda=|K_C|$, or when $r=1$. Then $d=3$ and $\Lambda=|K_C(-p)|\in W^1_3(C)$ when $C$ is non-hyperelliptic, or $d\in \{2, 3\}$ when $C$ is hyperelliptic respectively.

\vskip 3pt

We fix now a linear series $\ell\in G^r_d(C)$ and let $a_i:=a_i^{\ell}(p)=\alpha_i^{\ell}(p)+i$ be the $i$-th entry in the vanishing sequence of $\ell$ at $p$. By definition, then
$\ell(-a_i\cdot p)\in G_{d-a_i}^{r-i}(C)$, for each $i=0, \ldots, r$. Assume first that $C$ is non-hyperelliptic. Applying the previous observation, we write the following inequalities:
\begin{equation}\label{ineq0}
d-a_{r-i}\geq i+3\  \mbox{ for }i=0, \ldots, r-3,\  \ \ \mbox{  } \mbox{ and }
\end{equation}

\begin{equation}\label{ineq}
d-a_{r-2}\geq 4, \ \ d-a_{r-1}\geq 3, \ \ d-a_r\geq 0.
\end{equation}
Adding these inequalities up, we obtain that $(r+1)(d-r)-\sum_{i=0}^r \alpha_i^{\ell}(p)\geq 3r-2$, or equivalently,
$\rho(3, r, d, \alpha^{\ell}(p))\geq -2$. Equality holds only if the three inequalities appearing in (\ref{ineq}) are all equalities. When this happens,  then  $(a_{r-2}, a_{r-1}, a_r)=(d-4, d-3, d)$ and we find that $h^0(C, \OO_C(4p))=3$, that is, $p\in C$ is a hyperflex. This condition defines a codimension $2$ subvariety of $\mm_{3, 1}$. Thus, when $p\in C$ is not a hyperflex, then $\rho(g, r, d, \alpha^{\ell}(p))\geq -1$.

Assume now that $C$ is hyperelliptic. The inequalities in (\ref{ineq0}) still hold, as well as, $d-a_{r-2}\geq 4$ and $d-a_r\geq 0$.
Assuming that $a_{r-1}=2$, one finds that $h^0(C, \OO_C(2p))=2$, that is, $p\in C$ is a Weierstrass point.
\end{proof}

\vskip 3pt

\noindent \emph{Proof of Theorem \ref{nonexistence}}. We fix a Schubert index $\bar{\alpha}:\alpha_0\leq \ldots \leq \alpha_i\leq d-r$, such that
$\rho(g, r, d, \bar{\alpha})<-1$, and assume that for a general curve $C$, there exist a point $p\in C$ with $G^r_d(C, p, \bar{\alpha})\neq \emptyset$. The locus $\cD:=\{[C, p]\in \cM_{g, 1}: G^r_d(C, p, \bar{\alpha})\neq \emptyset\}$ is then a divisor in $\cM_{g, 1}$ (by using the result of Eisenbud and Harris \cite{EH1} mentioned in the introduction, the possibility $\cD=\cM_{g, 1}$ can be ruled out). We express the class of its closure $\dd$ in $\mm_{g, 1}$ in terms of the generators of $\mbox{Pic}(\mm_{g, 1})$:
$$[\dd]=a\lambda+c\psi-b_{\mathrm{irr}}\delta_{\mathrm{irr}}-\sum_{i=1}^{g-1} b_i\delta_i.$$
 A flag curve $X$ having a rational spine consisting of a tree of $\PP^1$'s and $g$ elliptic tails satisfies the \emph{pointed} Brill-Noether theorem with respect to marked points lying on the rational spine, see \cite{EH1} Theorem 1.1; precisely, if $p_1, \ldots, p_n$ are smooth points of $X$ not lying on any of the elliptic tails, then $\rho(g, r, d, \alpha^{\ell}(p_1), \ldots, \alpha^{\ell}(p_n))\geq 0$, for any limit linear series $\ell$ of type $\mathfrak g^r_d$ on $X$.  It thus follows that $\mbox{Im}(\varphi_{g, 1})\cap \dd=\emptyset$, in particular
$\varphi^*_{g, 1}([\dd])=0$. Using the description in \cite{EH2} Lemma 4.2 for the pull-back map at the level of divisors
$\varphi_{g, 1}^*:\mbox{Pic}(\mm_{g, 1})\rightarrow \mbox{Pic}(\mm_{0, g+1})$, we obtain that for $i=1, \ldots, g-2$ the following  relations hold:

\begin{equation}\label{bi}
b_i=\frac{(g-i)(g-i-1)}{g(g-1)}c+\frac{i(g-i)}{g-1} b_{g-1}.
\end{equation}

Next we employ Proposition \ref{gen3}. From the additivity of the Brill-Noether number, we find that the pointed curve $[B\cup_p C, \ q]\in \mm_{g, 1}$ carries  no limit linear series $\ell\in G^r_d(C)$ with $\rho(g, r, d, \alpha^{\ell}(q))<-1$, therefore,  $[j^*(\dd)]=0$, where we recall that $j:\mm_{3, 1}\rightarrow \mm_{g, 1}$ was the clutching map attaching a fixed pointed curve of genus $g-3$. In terms of coefficients of $[\dd]$ this information is expressed as follows:
$$0=[j^*(\dd)]=a\lambda-b_{\mathrm{irr}} \delta_{\mathrm{irr}}+b_{g-3} \psi-b_{g-2} \delta_1-b_{g-1}\delta_2\in \mbox{Pic}(\mm_{3, 1}).$$
From the independence of the boundary divisor classes on $\mm_{3, 1}$, we find that $a=b_{\mathrm{irr}}=b_{g-1}=b_{g-2}=b_{g-3}=0$. Using repeatedly (\ref{bi}), we find that $[\dd]=0\in \mbox{Pic}(\mm_{g, 1})$, that is, $\cD=0$. The contradiction comes from the assumption that $\cD$ was of codimension one in $\cM_{g, 1}$.
\hfill $\Box$

\begin{remark} The argument above is quite close to that in \cite{EH2}, where instead of the map $j$,  the clutching map $\iota:\mm_{2,1}\rightarrow \mm_{g, 1}$ obtained by attaching a fixed $2$-pointed curve of genus $g-2$ at the marked point of each stable curve of genus $2$ is considered. It is showed in \cite{EH2} Lemma 4.3 that $[\iota^*(\dd)]=0$. However, this does not appear to be enough in order to conclude that $[\dd]=0$ and finish the proof. Indeed, in view of Mumford's relation $10\lambda=\delta_{\mathrm{irr}}+2\delta_1$ on $\mm_{2, 1}$, there exists a one-dimensional vector space of divisor classes $\gamma\in \mbox{Pic}(\mm_{g, 1})$ satisfying both constraints $\varphi^*(\gamma)=0$ and $\iota^*(\gamma)=0$. To circumvent this problem, we considered instead the map $j:\mm_{3, 1}\rightarrow \mm_{g, 1}$. This also explains why the induction in the proof of Theorem \ref{nonexistence} starts only from genus $3$.
\end{remark}

\vskip 2pt

We split the proof of Theorem \ref{multiple} in two parts.

\vskip 4pt

\noindent

\noindent \emph{Proof of Theorem \ref{multiple}, the case $n=2$}.  Let $\bar{\alpha}^1$ and $\bar{\alpha}^2$ be Schubert indices of type
$(r, d)$ such that $\rho(g, r, d, \bar{\alpha}^1, \bar{\alpha}^2)<-1$ and assume by contradiction that the locus
$$D:=\bigl\{[C, p_1, p_2]\in \cM_{g, 2}: G^r_d\bigl(C, (p_1, \bar{\alpha}^1), (p_2, \bar{\alpha}^2)\bigr)\neq \emptyset \bigr\},$$
is an effective divisor. We express the class of its closure in $\mm_{g, 2}$ as a combination
$$[\overline{D}]=a\lambda+c_1\psi_1+c_2\psi_2-b_{\mathrm{irr}}\delta_{\mathrm{irr}}-\sum_{i=1}^{g-1} b_{i:1}\ \delta_{i:1}-\sum_{i=0}^{g-1}b_{i:12}\ \delta_{i:12}\in \mbox{Pic}(\mm_{g, 2}).$$
Let $\pi_2:\mm_{g, 2}\rightarrow \mm_{g, 1}$ be the morphism forgetting the second marked point. We claim that the divisor
$(\pi_2)_*(\overline{D}\cdot \Delta_{0:12})$ is trivial. Indeed, suppose this is not the case and we choose a general point $[C, p]\in (\pi_2)_*(\overline{D}\cdot \Delta_{0:12})$. We insert a smooth rational curve denoted by $\PP^1$ at the point $p$ and view $p_1, p_2$ as distinct smooth points on $\PP^1$. Since $[C\cup_p \PP^1, p_1, p_2]\in \overline{D}$, there exists a limit linear series
$\ell=\{\ell_C, \ell_{\PP^1}\}$ on $C\cup_p \PP^1$ with ramification $\alpha^{\ell}(p_1)\geq \bar{\alpha}^1$ and $\alpha^{\ell}(p_2)\geq \bar{\alpha}^2$. We write the following series of inequalities:
$$-1>\rho(g, r, d, \bar{\alpha}^1, \bar{\alpha}^2)\geq \rho(g, r, d, \alpha^{\ell}(p_1), \alpha^{\ell}(p_2))\geq $$
$$\rho(g, r, d, \alpha^{\ell_C}(p))+\rho(0, r, d, \alpha^{\ell_{\PP^1}}(p_1), \alpha^{\ell}(p_1), \alpha^{\ell}(p_2))\geq \rho(g, r, d, \alpha^{\ell_C}(p)),$$
where the third inequality reflects the additivity of the Brill-Noether number that is incorporated in the definition of the limit linear series
(see also \cite{EH1} p. 365), whereas the last inequality follows from the Pl\"ucker formula (see also \cite{EH1} Theorem 1.1).
Thus $\rho(g, r, d, \alpha^{\ell_C}(p))<-1$. Theorem \ref{nonexistence} guarantees that the locus of curves $[C, p]\in \cM_{g, 1}$ satisfying such a Brill-Noether condition is a subvariety of $\cM_{g, 1}$ of codimension at least $2$. This leads to a contradiction since $[C, p]$ was chosen as being a general point of a divisor on $\mm_{g, 1}$, therefore we conclude $(\pi_2)_*(\overline{D}\cdot \Delta_{0:12})=0$. At the level of classes, from the formulas
$$(\pi_2)_*(\psi_1\cdot \delta_{0:12})=(\pi_2)_*(\psi_2\cdot \delta_{0:12})=0, \ (\pi_2)_*(\delta_{i:1}\cdot \delta_{0:12})=0 \ \mbox{ for } \ i=1, \ldots, g-1,$$ respectively $$(\pi_2)_*(\lambda\cdot \delta_{0:12})=\lambda, \ (\pi_2)_*(\delta_{\mathrm{irr}}\cdot \delta_{0:12})=\delta_{\mathrm{irr}}, \ \ (\pi_2)_*(\delta_{0:12}^2)=-\psi_1, \
(\pi_2)_*(\delta_{i:12}\cdot \delta_{0:12})=\delta_{i:1},$$ we obtain the following identity
$$0=(\pi_2)_*([\overline{D}]\cdot \delta_{0:12})=a\lambda-b_{\mathrm{irr}}\delta_{\mathrm{irr}}+b_{0:12}\psi_1-\sum_{i=1}^{g-1} b_{i:12}\delta_{i:1}\in \mathrm{Pic}(\mm_{g, 1}),$$
hence $a=b_{\mathrm{irr}}=b_{i:12}=0$, for $i=0, \ldots, g-1$.

\vskip 3pt

To show that the coefficients $c_1$ and $c_2$ of $[\overline{D}]$ vanish, we note that $\mbox{Im}(\varphi_{g, 2})\cap \overline{D}=\emptyset$
(cf. \cite{EH1} Theorem 1.1), hence
$\varphi_{g, 2}^*([\overline{D}])=0$. We express this pull-back as an element of the Picard group $\mbox{Pic}(\mm_{0, g+2})^{\mathfrak{S}_g}$, where
the symmetric group $\mathfrak{S}_g$ acts on $\mm_{0, g+2}$ by permuting the first $g$ marked points of each element $[R, x_1, \ldots, x_g, p_1, p_2]$. Using for instance \cite{AC}, we write the following formulas:
$$\varphi_{g, 2}^*(\psi_1)=\psi_{p_1}, \ \ \varphi_{g, 2}^*(\psi_2)=\psi_{p_2},\  \  \varphi_{g, 2}^*(\delta_{i:1})=B^{p_1}_i.$$

Via the description of $\mbox{Pic}(\mm_{0, g+2})^{\mathfrak{S}_g}$ in terms of $\mathfrak{S}_g$-invariant boundary classes as well as the expression of the cotangent classes $\psi_{p_1}$ and $\psi_{p_2}$ given in Lemma \ref{invpic},  it is straightforward to check that the divisor classes
$\psi_{p_1}, \psi_{p_2}$ and $\{B^{p_1}_i\}_{i=1}^{g-1}$ are linearly independent. In particular, from the assumption $\varphi_{g, 2}^*(\overline{D})=0$, we conclude that $c_1=c_2=0$. Thus we can apply Lemma \ref{zerodiv}, which establishes Theorem \ref{multiple} for the case of $2$ marked points. \hfill $\Box$

\vskip 3pt

The case $n\geq 3$ can be easily reduced to the situation discussed above:

\vskip 2pt

\noindent \emph{Proof of Theorem \ref{multiple}, the case $n\geq 3$.} We assume, by contradiction, that the locus in moduli
$$D:=\Bigl\{[C, p_1, \ldots, p_n]\in \cM_{g, n}: G^r_d(C, (p_j, \bar{\alpha}^j))\neq \emptyset\Bigr\}$$
is a divisor. We shall show that the $\lambda, \{\psi_j\}_{j=1}^n$ and $\{\delta_{0:S}\}_{|S|=2}$-coefficients of $[\overline{D}]$ are zero, then use Lemma \ref{zerodiv} to conclude. The inductive hypothesis coupled with \cite{EH1} Theorem 1.1 ensures that $D$ enjoys two geometric properties:
\vskip 3pt

\noindent (1) If $\pi_i:\mm_{g, n}\rightarrow \mm_{g, n-1}$ is the map forgetting the $i$-th marked point, then for each $1\leq i\leq n$ and  $j\in \{i\}^c$, we have that $(\pi_i)_*([\overline{D}]\cdot \delta_{0:ij})=0$.

\vskip 2pt

\noindent (2) If $\varphi_{g, n}: \mm_{0, g+n}\rightarrow \mm_{g, n}$ is the flag map, then $\varphi_{g, n}^*([\overline{D}])=0$.

\vskip 4pt

We claim that these restrictions, together with the assumption that $D$ is effective imply that $D=0$. Firstly, condition (1) implies that the $\lambda, \delta_{\mathrm{irr}}, \psi_1, \ldots, \psi_n$-coefficients of $[\overline{D}]$ are equal to zero. Indeed, the $\lambda, \delta_{\mathrm{irr}}$ and $\psi_k$-coefficients of the divisor classes  $[\overline{D}]\in \mbox{Pic}(\mm_{g, n})$ and $(\pi_i)_*([\overline{D}]\cdot \delta_{0:ij})\in \mbox{Pic}(\mm_{g, n-1})$ respectively, are equal for all $k\in \{i, j\}^c$ (this is where the assumption $n\geq 3$ plays a role). But then via condition (2), the $\delta_{0:ij}$-coefficient of $[\overline{D}]$ is also equal to zero, since
$\varphi_{g, n}^*(\delta_{0:ij})=\delta_{0:p_i p_j}$. Furthermore, since all the $\psi$-coefficients in the expression $[\overline{D}]$ are zero, the coefficient of $\delta_{0:p_i p_j}$ in $[\varphi_{g, n}^*(\overline{D})]$ equals the $\delta_{0:ij}$-coefficient in $[\overline{D}]$, thus showing that the latter coefficient is equal to zero. We now apply Lemma \ref{zerodiv} and conclude. \hfill $\Box$

\end{document}